\renewcommand{\phi}{\varphi}
\renewcommand{\le}{\leqslant}
\renewcommand{\ge}{\geqslant}
\renewcommand{\a}{\alpha}
\newcommand{\mC}{\mathcal{C}}
\newcommand{\Leb}{\mathop{\mathrm{Leb}}}
\newcommand{\Lip}{\mathop{\mathrm{Lip}}}
\newcommand{\dist}{\mathop{\mathrm{dist}}}
\newcommand{\bbN}{\mathbb{N}}
\newcommand{\const}{\mathop{\mathrm{const}}}
\newtheorem{theorem}{Theorem}
\newtheorem{corollary}{Corollary}
\newtheorem{lemma}{Lemma}
\theoremstyle{definition}
\newtheorem{remark}{Remark}
\newtheorem*{remark*}{Remark}
\newtheorem{definition}{Definition}
\newtheorem{definition*}{Definition}
\newcounter{rom}
\title{Special ergodic theorems\\
and dynamical large deviations}
\author{Victor Kleptsyn\footnote{CNRS, Institute of Mathematical Research of Rennes (IRMAR, UMR~6625 du CNRS); victor.kleptsyn@univ-rennes1.fr}\addtocounter{footnote}{2}\ \ and Dmitry Ryzhov\footnote{Chebyshev Laboratory (Department of Mathematics and Mechanics, St.-Petersburg State University, Russia)}}
\begin{document}
\maketitle
\begin{abstract}
Let $f\colon M\to M$ be a self-map of a compact manifold~$M$, admitting an global SRB measure~$\mu$. For a continuous test function $\varphi\colon M\to \mathbb R$ and a constant $\alpha>0$, consider the set $K_{\varphi,\alpha}$ of the initial points for which the Birkhoff time averages of the function~$\varphi$ differ from its $\mu$--space average by at least~$\alpha$. As the measure~$\mu$ is an SRB one, the intersection of $K_{\varphi,\alpha}$ with the basin of attraction of~$\mu$ should have zero Lebesgue measure. 
\par
The \emph{special ergodic theorem}, whenever it holds, claims that, moreover, this intersection has the Hausdorff dimension less than the dimension of~$M$. We prove that for Lipschitz maps, the special ergodic theorem follows from the dynamical large deviations principle. 
\par
Applying theorems of L.~S.~Young and of V. Ara\'ujo and M. J. Pacifico, we conclude that the special ergodic theorem holds for transitive hyperbolic attractors of $C^2$-diffeomorphisms, as well as for some other known classes of maps (including the one of partially hyperbolic non-uniformly expanding maps).
\end{abstract}

\section{Introduction}
\subsection{Basic concepts}
Let $f:M\to M$ be a self-map of a compact Riemannian manifold $M$ to itself. The famous Birkhoff ergodic theorem states that if $\mu$ is an ergodic invariant measure for $f$, then for any continuous 
function $\varphi\in C(M)$ and for $\mu$-almost every $x\in M$ the time averages $\varphi_n$ of the function $\varphi$ at the point $x$ converge to the space average~$\bar{\varphi}$:
\begin{equation}
\lim\limits_{n\to\infty} \phi_n(x)= \bar\varphi, \quad \text{ where } \quad \phi_n(x):=\frac{1}{n}\sum_{k=0}^{n-1} \phi\circ f^k(x), \quad \bar\phi:=\int_M \phi\,d\mu.
\end{equation}

The conclusion of this theorem has one large drawback. It is usually much more natural (in particular, from the ``physical'' point of view) to take an initial point $x$, which is generic in the sense of the Lebesgue measure. But if the measure $\mu$ is supported on some ``thin'' set (for instance, on a point), one cannot deduce anything for initial points outside this set.

This leads to another famous notion, the one of Sinai-Ruelle-Bowen measure (or SRB measure for short):
\begin{definition}
An invariant measure $\mu$ is called an \emph{(observable) SRB measure} for $f:M\to M$ if there is a set $U\supset M$ of positive Lebesgue measure such that for every $x\in U$ and for every continuous function $\phi$ on $M$ one has
\begin{equation}\label{eq:srb}
\lim\limits_{n\to\infty}\phi_n (x) \to \bar{\phi}. 
\end{equation}
An SRB measure is called \emph{global} if one can take $U=M$.
\end{definition}

Not all the systems possess SRB measures. The first example of a system with no SRB measures, the \emph{heteroclinic attractor} (sf.~\cite{Takens}), was given by Bowen himself. However, under certain assumptions, e.g. hyperbolicity of the system, one can guarantee its existence; we refer the reader to~\cite{Young-SRB} and to~\cite[Sec. 14]{KH06} for the survey of this subject.

But even the existence of SRB measure sometimes turns out to be insufficient. In some situations~(cf. \cite{IKS},~\cite{IN},~\cite{KS}), it is necessary not only to conclude that the set of the ``bad'' initial conditions has zero Lebesgue measure, but also to state the same for its images under H\"older (but not necessarily absolutely continuous!) conjugacy between two dynamical systems.

One of the ways of handling this difficulty bases on the analysis of Hausdorff dimension of this set. It leads to the concept of the \emph{special ergodic theorem}, proposed by Yu. Ilyashenko, to which this paper is devoted. We will now introduce some auxiliary notations, and present this concept in Def.~\ref{def:spec-erg}. 

Taking a continuous test-function $\phi\in C(M)$ and any $\a\ge 0$, we define the set of ($\phi,\a$)-nontypical points as
\begin{equation*}
	K_{\phi,\a}:=\left\{ x\in X\colon \varlimsup\limits_{n\to\infty} |\phi_n(x)-\bar\phi|>\a \right\}.
\end{equation*}

In other words, $K_{\varphi,\a}$ is the set of points for which the statement ``time averages converge to the space one'' is violated ``in an essential way''.

By definition, if $\mu$ is a global SRB measure, then $\Leb(K_{\phi,0})=0$. Thereinafter we assume that $\mu$ actually is a global SRB measure. Otherwise we can restrict all our considerations to its (invariant) basin of attraction~$U\subset M$, that is, the set of all points for which~\eqref{eq:srb} holds.

\begin{definition}\label{def:spec-erg}
Say that for $(f,\mu)$ the \emph{special ergodic theorem} holds, if for any continuous function $\varphi\in C(M)$ and any $\alpha>0$ the Hausdorff dimension of the set $K_{\phi,\a}$ is strictly less than the dimension of the phase space:
\begin{equation}\label{eq:set}
	\forall \varphi\in C(M), \alpha>0 \qquad \dim_H K_{\phi,\a}<\dim M.
\end{equation}
\end{definition}


\begin{remark}
Formally speaking, to state the property \eqref{eq:set} one doesn't need to claim the existence of SRB measure. Though, this property itself immediately implies that $\mu$ actually is an SRB measure. Indeed, the set~$K_{\phi,0}$ of points for which the equality~\eqref{eq:srb} between time and space averages does not hold is exhausted by a countable number of sets $K_{\varphi,1/n}$ of zero Lebesgue measure.
\end{remark}

Having introduced this notion, Yu.~Ilyashenko~\cite{IKS} proved that the special ergodic theorem holds for the circle doubling map; later, P.~Saltykov~\cite{Saltykov} proved it also for the linear Anosov map on the 2-torus. Analogous questions were also considered in the work of Gurevich and Tempelman~\cite{GT02}, where the Hausdorff dimensions of level sets of Birkhoff averages for finite spin lattice systems were evaluated.

The aim of this note is to prove the special ergodic theorem for a larger class of maps. Namely, we notice and prove, that the special ergodic theorem can be obtained as a corollary of another property of the system, \emph{the dynamical large deviations principle}. This implies, that the special ergodic theorem holds, for instance, for all the hyperbolic transitive $C^2$--diffeomorphisms, for 
partially hyperbolic non-uniformly expanding $C^1$-diffeomorphisms, and for some other examples.

\subsection{Large deviations principles}
The dynamical large deviations principle is a dynamical counterpart of the large deviations principle in the probability theory. The latter states that under some assumptions on the distribution of i.i.d. random variables $\xi_j$, for any $\a>0$ the probability of a large deviation $P(|(\xi_1+\dots+\xi_n)/n - E \xi|>\a)$ decreases exponentially in~$n$ as $n$ goes to infinity, see, for example,~\cite{Varadhan}.

\begin{definition}\label{def:DLDP}
Suppose that $\mu$ is an invariant measure for a self-map $f:M\to M$, and $m$ be an arbitrary measure on $M$.
Say, that for $(f,\mu)$ the \emph{dynamical large deviations principle for reference measure $m$} holds, if for any $\varphi\in C(M)$, $\alpha>0$ there exist constants\footnote{for a fixed function~$\phi$, the function~$h=h(\a)$ is called the ''rate function'' or ''Cram\'er function''
}
$C=C(\a,\phi),h=h(\a,\phi)>0$
such that
\begin{equation}\label{eq:ldp}
\forall n\in\bbN \quad m(K_{\phi,\a,n})\le C(\a,\phi) e^{-nh(\a,\phi)},
\end{equation}
where
$$   
K_{\phi,\a,n} :=\{ x\in M \colon |\phi_n(x)-\bar{\phi}|\ge \a\}.
$$
\end{definition}


Usually, for $m$ in~\eqref{eq:ldp} stands either Lebesgue measure or the invariant measure $\mu$ itself.
In this paper we will use the case $m=Leb$ (otherwise our arguments wouldn't work). Under this assumption, $\mu$ in Definition~\ref{def:DLDP} is a global SRB measure due to the Borel--Cantelli Lemma type arguments.

The dynamical large deviations principle with respect to the invariant measure was widely studied (see, for example,~\cite{Melbourne-Nicol},~\cite{Bellet-Young}). Unfortunately, we cannot use directly any of these numerous results, unless if the invariant measure is absolutely continuous with respect to the Lebesgue one.



\subsection{Theorem and applications}
\begin{theorem}[Main result]\label{t:main}
Let $f:M\to M$ be a Lipschitz self-map of compact Riemannian manifold $M$ and $\mu$ be a global SRB measure. If the dynamical large deviations principle with Lebesgue reference measure holds for $(f,\mu)$, then for this map the special ergodic theorem also holds.
\end{theorem}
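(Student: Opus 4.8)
The plan is to upgrade the large deviations bound \eqref{eq:ldp} from a statement about Lebesgue measure to a statement about covering numbers, which is what Hausdorff dimension actually sees. First I would reduce to a $\limsup$ set: since $\varlimsup_n|\phi_n(x)-\bar\phi|>\a$ forces $|\phi_n(x)-\bar\phi|\ge\a$ for infinitely many $n$, we have
$$
K_{\phi,\a}\ \subseteq\ \bigcap_{N\ge 1}\ \bigcup_{n\ge N} K_{\phi,\a,n},
$$
so for every $N$ the set $K_{\phi,\a}$ is covered by $\bigcup_{n\ge N}K_{\phi,\a,n}$, and it suffices to cover each $K_{\phi,\a,n}$ economically at a scale that shrinks with $n$.

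The scale to use is $r_n:=L^{-n}$, where $L>1$ is a Lipschitz constant of $f$ (enlarge it if necessary so that $L>1$). Then $f^k$ is $L^k$-Lipschitz, and if $\omega$ denotes the modulus of continuity of $\phi$ on the compact $M$, then for $\dist(x,y)\le L^{-n}$ one gets
$$
|\phi_n(x)-\phi_n(y)|\ \le\ \frac1n\sum_{k=0}^{n-1}\omega\!\left(L^{k}\dist(x,y)\right)\ \le\ \frac1n\sum_{j=1}^{n}\omega(L^{-j})\ =:\ \eta_n .
$$
Since $\omega(L^{-j})\to 0$, the Cesàro averages $\eta_n\to 0$. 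Fix $N_0$ with $\eta_n\le\a/2$ for all $n\ge N_0$. The key consequence is that for $n\ge N_0$ any ball of radius $L^{-n}$ that meets $K_{\phi,\a,n}$ lies entirely inside $K_{\phi,\a/2,n}$: the Birkhoff average $\phi_n$ is essentially constant at scale $L^{-n}$, so the ``bad'' set at level $\a$ is, up to the cosmetic loss $\a\to\a/2$, a union of whole $L^{-n}$-balls.

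Now cover $M$ by $\const\cdot L^{nd}$ balls of radius $L^{-n}$ with bounded multiplicity, where $d=\dim M$ (standard on a compact Riemannian manifold, each such ball having volume $\asymp L^{-nd}$). By the previous step together with the dynamical large deviations principle applied with $\a/2$, the number of these balls that meet $K_{\phi,\a,n}$ is at most
$$
\const\cdot L^{nd}\cdot \Leb\!\left(K_{\phi,\a/2,n}\right)\ \le\ \const\cdot L^{nd}\,e^{-nh},\qquad h:=h(\a/2,\phi)>0 .
$$
For $s<d$ and any $N\ge N_0$, covering $K_{\phi,\a}$ by all these balls for $n\ge N$ gives an $s$-sum bounded by $\const\sum_{n\ge N}L^{nd}e^{-nh}(L^{-n})^{s}=\const\sum_{n\ge N}\bigl(L^{d-s}e^{-h}\bigr)^{n}$, which is a convergent geometric series tending to $0$ as $N\to\infty$ whenever $(d-s)\log L<h$. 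Hence the $s$-dimensional Hausdorff measure of $K_{\phi,\a}$ vanishes for every $s>d-h/\log L$, so $\dim_H K_{\phi,\a}\le d-h(\a/2,\phi)/\log L<\dim M$, as required.

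The main obstacle is conceptual and is concentrated in the second step: a priori a set of arbitrarily small Lebesgue measure can have full Hausdorff dimension, so \eqref{eq:ldp} alone says nothing about $\dim_H$. What rescues the argument is the observation that at the specific scale $L^{-n}$ the Birkhoff sum $\phi_n$ oscillates by only $\eta_n=o(1)$ — this is exactly where Lipschitzness of $f$ (controlling the expansion of $f^k$ by $L^k$) and the Cesàro trick are indispensable — which turns ``small measure'' into ``few balls at scale $L^{-n}$''. Once that is in place, the $\limsup$-set bookkeeping and the geometric-series estimate are routine; the only points needing care are the choice to pass from $\a$ to $\a/2$ before invoking the large deviations bound, and the standard volume/covering estimates for balls on a compact Riemannian manifold.
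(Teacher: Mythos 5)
Your proof is correct and follows essentially the same route as the paper's: cover $K_{\varphi,\alpha}$ by the tails $\bigcup_{n\ge N}K_{\varphi,\alpha,n}$, observe that at scale comparable to $L^{-n}$ any ball meeting $K_{\varphi,\alpha,n}$ lies inside $K_{\varphi,\alpha/2,n}$, and convert the Lebesgue bound from the large deviations principle into a covering-number bound, arriving at the same exponent $d-h(\alpha/2,\varphi)/\log L$. The only immaterial differences are that you control the oscillation of $\varphi_n$ via the modulus of continuity and a Ces\`aro argument where the paper fixes a uniform-continuity $\delta$ and works at scale $L^{-n}\delta$, and that you count balls via a bounded-multiplicity cover where the paper uses a maximal $r(n)/2$-separated set.
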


As will be shown in the next section, the proof of Theorem~\ref{t:main} is indeed rather simple.
However, the conclusion itself seems very interesting to us~--- especially, due to the possibilities to apply it to the study of the skew products.

There are numerous applications of the theorem.
\begin{definition}
A $C^1$-map $f\colon M\to M$ is \textit{non-uniformly expanding}, if
\begin{equation*}
\limsup_{n\to +\infty}\frac 1n \sum_{j=0}^n \log ||Df(f^j(x))^{-1}||<0
\end{equation*}
for Lebesgue almost all $x$.
A $C^1$-map $f\colon M\to M$ is \textit{partially hyperbolic non-uniformly expanding}, if
the tangent bundle of $M$ decomposes into direct sum of two continuous subbundles such that
one of them is uniformly contracting, and the other is non-uniformly expanding.
\end{definition}

\newcommand{\irom}{\addtocounter{rom}{1}%
\text{(\roman{rom})}}
\begin{corollary}
The special ergodic theorem holds:
\begin{itemize}
\item[\irom]
for transitive hyperbolic attractors of $C^2$-diffeomorphisms; in particular, for transitive $C^2$~Anosov diffeomorphisms;
\item[\irom]
for the stochastic maps from real quadratic family $f(x)=x^2+c$;
\item[\irom]
for typical maps from general $C^2$ unimodal families;
\item[\irom]
for partially hyperbolic non-uniformly expanding $C^1$-diffeomorphisms;
\item[\irom]
for Lorenz-like maps.
\end{itemize}
\end{corollary}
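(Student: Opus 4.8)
The plan is to read the corollary as a direct application of Theorem~\ref{t:main}: for each of the five classes it suffices to verify the two hypotheses of that theorem, namely that the map admits a global SRB measure and that it satisfies the dynamical large deviations principle~\eqref{eq:ldp} with Lebesgue reference measure. Since these classes are attractors rather than systems with a genuinely global SRB measure, I would first restrict, as explained before Definition~\ref{def:spec-erg}, to the basin of attraction~$U$, on which the SRB measure is global and on which we measure Lebesgue; the Hausdorff-dimension conclusion for $K_{\phi,\a}\cap U$ is exactly what the special ergodic theorem asserts once $U$ has full measure in a neighbourhood of the attractor. The existence of the SRB measure is, in every case, part of the classical theory surveyed in~\cite{Young-SRB}, so the entire weight of the argument falls on producing, for each class, a reference establishing the large deviations upper bound \emph{with respect to Lebesgue measure}. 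I would organise the five items into two groups according to which cited theorem supplies this bound.

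For items (i), (ii) and (iii) I would invoke the large deviation theorem of L.~S.~Young. Her framework covers uniformly hyperbolic (Axiom~A) attractors as well as the relevant non-uniformly hyperbolic interval maps, and --- this is the point that makes it usable here --- it is formulated for a smooth reference measure on the basin rather than for the invariant measure. For a transitive hyperbolic attractor of a $C^2$-diffeomorphism it yields~\eqref{eq:ldp} directly; for the stochastic parameters of the quadratic family $f(x)=x^2+c$ and for the typical parameters of a general $C^2$ unimodal family, the map is modelled by a tower with exponential return-time tails and the absolutely continuous invariant measure plays the role of~$\mu$, so the tower estimate translates into the Lebesgue-reference bound~\eqref{eq:ldp}. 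In all three cases $\mu$ is the unique equilibrium state, which is precisely what guarantees a strictly positive rate, i.e. $h(\a,\phi)>0$ for every $\a>0$. For items (iv) and (v) I would instead appeal to the work of V.~Ara\'ujo and M.~J.~Pacifico on non-uniformly expanding systems and on the geometric Lorenz model, which gives exactly the Lebesgue-measure large deviations upper bound for partially hyperbolic non-uniformly expanding $C^1$-diffeomorphisms and for Lorenz-like maps; then Theorem~\ref{t:main} applies verbatim.

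A small but convenient observation I would exploit is that Definition~\ref{def:DLDP} asks only for the upper bound~\eqref{eq:ldp}, never for a matching lower bound. Hence only the ``easy half'' of a large deviations principle is needed, and this half is available in the cited works even in situations where the full principle (with a precise rate function) is delicate or unknown; this is what lets a single pair of sources cover such a heterogeneous list of systems.

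The main obstacle --- and the reason the corollary is not a triviality --- is the insistence on the \emph{Lebesgue} reference measure. Most of the abundant large deviations literature, for instance~\cite{Melbourne-Nicol} and~\cite{Bellet-Young}, is phrased with respect to the invariant measure~$\mu$, and for the classes where $\mu$ is singular with respect to Lebesgue --- the hyperbolic attractor of~(i), the partially hyperbolic case of~(iv), and the Lorenz-like case of~(v) --- such results are simply insufficient to feed Theorem~\ref{t:main}. One must therefore be careful to quote exactly those versions of Young's and of Ara\'ujo--Pacifico's theorems that are genuinely stated with a smooth reference measure on the basin, and then verify that their hypotheses (transitivity, the tower and tail conditions, uniqueness of the equilibrium state) are met for each class; this is where the real checking lies. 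For the genuinely one-dimensional, absolutely continuous cases the distinction between Lebesgue and invariant reference measures evaporates and the verification is routine.
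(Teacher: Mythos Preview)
Your approach is essentially identical to the paper's: both reduce the corollary to Theorem~\ref{t:main} by citing published large deviations results with Lebesgue reference measure for each class, and you correctly isolate the one nontrivial point (that the reference measure must be Lebesgue, not~$\mu$). The only discrepancy is in attribution --- the paper cites Young~\cite{Young} solely for item~(i) and Ara\'ujo--Pacifico~\cite{AP}, Section~2, for all of (ii)--(v), rather than grouping (ii) and (iii) with Young as you do.
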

All the statements follow from the large deviations principles for these maps.
The principle for the first item was proven in~\cite{Young}, see Thm.~2 (2) and (ii) in the discussion after it. For other items, see~\cite{AP}, section~2.

Note that the existence of a global SRB measure for a smooth map does not imply the special ergodic theorem. In~\cite{R} there is presented such an example of $C^\infty$-smooth diffeomorphism of a compact manifold that admits a global SRB measure but for which the special ergodic theorem doesn't hold.

\subsection{Flows}
All the above concepts and statements can be defined for flows as well as for maps. Namely, let 
$\{f_t\}$ 
be a flow on a compact Riemannian manifold $M$ with an invariant measure $\mu$. For any continuous function $\phi\in C(M)$ one defines its $T$-time average $\phi_T$ over the time $T>0$ as 
\begin{equation}
\phi_T(x) :=\frac{1}{T}\int\limits_{t=0}^{T} \phi\circ f_t(x) dt.
\end{equation}

In the same way as for the maps, given $\a\ge 0$, we define the set of ($\phi,\a$)-nontypical points:
\begin{equation*}
	K_{\phi,\a}:=\left\{ x\in X\colon \varlimsup\limits_{T\to\infty} |\phi_T(x)-\bar\phi|>\a \right\},
\end{equation*}
where, as above, $\bar\phi=\int_M \phi\,d\mu$ denotes a space average of the function $\phi$.
We will also use the notation
$$   
K_{\phi,\a,T} :=\{ x\in M \colon |\phi_T(x)-\bar{\phi}|\ge \a\}.
$$

\begin{definition}\label{def:spec-erg2}
Say that for the flow $F$ with an invariant measure $\mu$ the \emph{special ergodic theorem} holds, if for any continuous function $\phi\in C(M)$ and any $\a>0$ the Hausdorff dimension of the set $K_{\phi,\a}$ is strictly less than the dimension of the phase space:
\begin{equation}
	\forall \phi\in C(M), \a>0 \qquad \dim_H K_{\phi,\a}<\dim M.
\end{equation}
We also say that the \emph{dynamical large deviations principle with respect to the reference measure $m$} holds for $F$, if there exist constants $C=C(\a,\phi),h=h(\a,\phi)>0$
such that
\begin{equation}
\forall T>0 \quad m(K_{\phi,\a,T})\le C(\a,\phi) e^{-nh(\a,\phi)}
\end{equation}
\end{definition}

In the same way as for the maps, we have the following

\begin{theorem}\label{t:flow}
Let $\mu$ be an invariant measure for a flow (a continuous action of $\mathbb{R}$) $\{f_t\}$ on a compact Riemannian manifold $M$. Suppose that $f_1:M\to M$ is a Lipschitz self-map of $M$. If the dynamical large deviations principle with the Lebesgue reference measure holds for $(\{f_t\},\mu)$, then the special ergodic theorem also holds for this flow.
\end{theorem}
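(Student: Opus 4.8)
The plan is to reduce the flow case to the map case already established in Theorem~\ref{t:main}. The natural candidate for the auxiliary map is $g:=f_1$, which by hypothesis is Lipschitz. The first step is to check that $\mu$ is an invariant (indeed global SRB) measure for $g$ and that the dynamical large deviations principle for $g$ with Lebesgue reference measure follows from the one for the flow. The second step is to relate the nontypical sets $K^{F}_{\phi,\a}$ of the flow to the nontypical sets $K^{g}_{\psi,\a'}$ of the time-one map for a suitably chosen test function $\psi$ and threshold $\a'$. The third step is to invoke Theorem~\ref{t:main} for $(g,\mu)$ and transfer the Hausdorff dimension bound back to the flow.

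For the first step, one sets $\psi(x):=\int_0^1 \phi\circ f_t(x)\,dt$, which is continuous on $M$, and observes that its Birkhoff sums under $g$ telescope into the flow time average: $\frac1N\sum_{k=0}^{N-1}\psi(g^k x)=\phi_N(x)$ for integer $N$. Since $\bar\psi=\int_M\psi\,d\mu=\int_M\phi\,d\mu=\bar\phi$ by flow-invariance of $\mu$, the set $K^{g}_{\psi,\a}$ of points where the $g$-Birkhoff averages of $\psi$ deviate by at least $\a$ coincides with $\{x:|\phi_N(x)-\bar\phi|\ge\a \text{ for infinitely many }N\in\bbN\}$, and similarly $K^{g}_{\psi,\a,N}=K^{F}_{\phi,\a,N}$ for integer times $N$. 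Hence the DLDP estimate~\eqref{eq:ldp} for $(g,\mu,\psi)$ is exactly the flow DLDP restricted to integer times; this in particular shows $\mu$ is a global SRB measure for $g$ via Borel--Cantelli, so all hypotheses of Theorem~\ref{t:main} hold for $(g,\mu)$.

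The only genuine issue is that the flow nontypical set $K^{F}_{\phi,\a}$ records deviations along the continuous time $T\to\infty$, whereas Theorem~\ref{t:main} applied to $g$ controls only the integer-time subsequence. The remedy is a standard uniform-continuity-in-time argument: since $\phi$ is continuous on the compact set $M$ and $(t,x)\mapsto f_t(x)$ is continuous, $\phi$ is bounded, say $|\phi|\le B$, and for $T=N+s$ with $N\in\bbN$, $s\in[0,1)$ one has
\begin{equation*}
|\phi_T(x)-\phi_N(x)|\le \frac{|N-T|}{T}\,|\phi_N(x)|+\frac1T\left|\int_N^T\phi\circ f_t(x)\,dt\right|\le \frac{2B}{N}.
\end{equation*}
Consequently, if $\varlimsup_{T\to\infty}|\phi_T(x)-\bar\phi|>\a$ then also $\varlimsup_{N\to\infty,\,N\in\bbN}|\phi_N(x)-\bar\phi|\ge\a$, i.e. $K^{F}_{\phi,\a}\subset K^{g}_{\psi,\a'}$ for every $\a'<\a$; fixing such an $\a'$ suffices. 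Therefore
\begin{equation*}
\dim_H K^{F}_{\phi,\a}\le \dim_H K^{g}_{\psi,\a'}<\dim M
\end{equation*}
by Theorem~\ref{t:main} applied to the Lipschitz map $g=f_1$ with SRB measure $\mu$ and DLDP (valid for $\a'$ as above). This completes the reduction; the main obstacle, the passage from continuous to integer time, is dispatched by the elementary estimate above and carries no real difficulty.
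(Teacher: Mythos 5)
Your proof is correct and follows essentially the same route as the paper: reduce to the time-one map $f_1$ and control the discrepancy $|\phi_T-\phi_{[T]}|=O(1/[T])$ to pass from continuous to integer times, landing in a nontypical set of $f_1$ at a slightly smaller threshold. You are somewhat more explicit than the paper in identifying the induced observable $\psi=\int_0^1\phi\circ f_t\,dt$ and in checking the hypotheses of Theorem~\ref{t:main} for $(f_1,\mu)$, but the argument is the same.
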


\begin{corollary}
The special ergodic theorem holds for Anosov flows, as well as for the Lorenz flow and, moreover, for geometric Lorenz flows.
\end{corollary}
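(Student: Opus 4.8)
The plan is to derive all three cases from Theorem~\ref{t:flow}: for each class of flow it suffices to verify that (a)~the time-one map $f_1$ is Lipschitz, and (b)~the dynamical large deviations principle with Lebesgue reference measure holds for $(\{f_t\},\mu)$, where $\mu$ is the flow's SRB measure (whose existence for these systems is classical, and is in any case forced by~(b) via the Borel--Cantelli argument already noted for maps). Once~(a) and~(b) are in place, Definition~\ref{def:spec-erg2} is satisfied and the special ergodic theorem follows without further work.

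Checking~(a) is immediate in every case. An Anosov flow is generated by a $C^\infty$ vector field on a compact manifold, while both the classical Lorenz flow and a geometric Lorenz flow are generated by a $C^1$ vector field on a compact trapping region. Hence the time-one map is a $C^1$ diffeomorphism of the relevant compact phase space, in particular Lipschitz; the Lorenz singularity causes no difficulty, since it is merely a fixed point of the smooth map $f_1$.

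The substance lies in~(b). For Anosov flows the estimate is supplied by uniform hyperbolicity: passing to a Poincar\'e cross-section produces a uniformly hyperbolic return map to which the map-level large deviations estimate applies, and the flow average $\phi_T$ is compared with ergodic sums of the induced observable over this return map, the return (roof) function being bounded and bounded away from zero. For the Lorenz and geometric Lorenz flows I would use the standard reduction of the singular-hyperbolic attractor to a suspension over a one-dimensional Lorenz-like map: quotienting along the contracting stable foliation yields exactly a map of the type covered by the Lorenz-like item of the map-level Corollary, for which the large deviations principle with Lebesgue reference measure already holds. A flow deviation $|\phi_T(x)-\bar\phi|\ge\a$ then forces either a deviation of the corresponding Birkhoff average over the base map---controlled by the base principle---or an atypical fluctuation of the partial sums of the roof function, and one shows that in both cases the offending set of initial conditions has exponentially small Lebesgue measure in~$T$.

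The main obstacle is precisely the Lorenz singularity: the flow speed vanishes at the equilibrium, so the return time to a cross-section is unbounded and only integrable rather than bounded. The hard part is therefore to obtain an exponential large-deviation bound for the partial sums of this unbounded roof function and to combine it with the base principle so that the combined exceptional set still decays exponentially in~$T$. Once this is established on the codimension-one cross-section, transferring it to an exponentially small subset of the full three-dimensional phase space uses the smoothness of flow-box coordinates together with the Lipschitz time-one map. For the classical Lorenz parameters one finally invokes Tucker's theorem to place the flow inside the geometric model, so that the same argument applies verbatim.
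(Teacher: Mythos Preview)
Your overall strategy---reduce to Theorem~\ref{t:flow} by checking that $f_1$ is Lipschitz and that the dynamical large deviations principle with Lebesgue reference measure holds---is exactly what the paper does. The difference lies entirely in how you handle~(b). The paper does not attempt to prove the large deviations principle for any of these flows; it simply cites it from the literature: Young~\cite{Young}, Thm.~2~(2) and item~(ii) of the subsequent discussion, for Anosov flows, and Ara\'ujo~\cite{A-full} for geometric Lorenz flows (which subsumes the classical Lorenz flow). That is the entire proof of the corollary in the paper.

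Your proposal instead sketches how one might establish the large deviations principle from scratch via Poincar\'e sections and suspensions over Lorenz-like maps. This is not wrong as a narrative, but it is both unnecessary and, as written, not a proof: you yourself flag the unbounded roof function as ``the hard part'' and leave it at ``once this is established.'' That is precisely the nontrivial content of~\cite{A-full}, and reproducing it is well beyond what this corollary calls for. Replacing your middle two paragraphs with the citations above brings your argument in line with the paper and closes the gap. (Minor quibble: Anosov flows need not be $C^\infty$; it suffices that the generating vector field be $C^1$ on a compact manifold for $f_1$ to be Lipschitz.)
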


The statements follow from the large deviations principles for these flows.
For Anosov flows, this principle was proven in~\cite{Young} (see Thm.~2 (2) and (ii) in the discussion after it). For precise definitions of (geometric) Lorenz flows and for large deviations principles for these flows see~\cite{A-full}.

\section{Proofs}
\subsection{Proof of the main theorem}
Let $f\colon M\to M$ be Lipschitz map on a $d$-dimensional compact manifold~$M$ with a global SRB measure $\mu$ supported on it, for which the dynamical large deviations principle holds. Choose and fix $\varphi\in C(M)$, $\a>0$, and denote $L:=\max(\Lip(f),2)$.

We are going to prove that 
$$
\dim_H K_{\phi,\a} \le d_0:=d-\frac{h(\a/2,\varphi)}{\ln L}< d.
$$ 
That is, for each $d'>d_0$ we want to find a cover of $K_{\phi,\a}$ with arbitrarily small $d'$-dimensional volume. Thus we deduce that $d'$-dimensional volume of $K_{\phi,\a}$ is zero, and hence $\dim_H K_{\phi,\a}\le d'$ for all $d'>d_0$.

To construct such a cover, we first choose (and fix) $\delta>0$ such that
$$
\dist(x,y)<\delta \quad \Rightarrow \quad |\varphi(x)-\varphi(y)|<\alpha/2.
$$

Then we have the following lemma:
\begin{lemma}\label{l:ball}
If $x\in K_{\phi,\a,n}$, then $B(x,r)\subset K_{\phi,\a/2,n}$, where
\begin{equation}\label{eq:r(n)}
r=r(n):=L^{-n}\delta
\end{equation}
and $B(x,r)$ is a ball centered at $x$ with radius $r$.
\end{lemma}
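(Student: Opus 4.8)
The plan is to control how much the Birkhoff average $\phi_n$ can change when we move the base point slightly. The key observation is that the map $f$ is Lipschitz with constant $\Lip(f)\le L$, so for any two points $x,y$ and any $0\le k\le n-1$ we have $\dist(f^k(x),f^k(y))\le L^k\dist(x,y)$. Hence if $\dist(x,y)<r=L^{-n}\delta$, then in particular for every $k$ in the range $0\le k\le n-1$ we get $\dist(f^k(x),f^k(y))\le L^k\cdot L^{-n}\delta\le L^{n-1}\cdot L^{-n}\delta=L^{-1}\delta\le\delta$ (using $L\ge 2>1$). By the choice of $\delta$, this forces $|\phi(f^k(x))-\phi(f^k(y))|<\alpha/2$ for each such $k$.

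From here the argument is a direct averaging: since
\[
|\phi_n(x)-\phi_n(y)|\le\frac1n\sum_{k=0}^{n-1}\bigl|\phi(f^k(x))-\phi(f^k(y))\bigr|<\frac1n\cdot n\cdot\frac{\alpha}{2}=\frac{\alpha}{2},
\]
we obtain that $|\phi_n(x)-\phi_n(y)|<\alpha/2$ whenever $\dist(x,y)<r(n)$. Now suppose $x\in K_{\phi,\alpha,n}$, i.e. $|\phi_n(x)-\bar\phi|\ge\alpha$, and let $y\in B(x,r(n))$. Then by the triangle inequality $|\phi_n(y)-\bar\phi|\ge|\phi_n(x)-\bar\phi|-|\phi_n(x)-\phi_n(y)|>\alpha-\alpha/2=\alpha/2$, so $y\in K_{\phi,\alpha/2,n}$. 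Since $y$ was arbitrary in the ball, $B(x,r(n))\subset K_{\phi,\alpha/2,n}$, which is exactly the claim.

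I do not expect any genuine obstacle here; the statement is essentially a packaging of the elementary Lipschitz estimate, and the only point requiring a little care is checking that $L^k r(n)\le\delta$ holds for the full range $k=0,\dots,n-1$, which is why the radius is defined with the exponent $n$ rather than $n-1$ and why we take $L\ge 2$ (ensuring $L>1$ so the geometric factors behave). One should also note that the lemma as stated is used later with this particular $r(n)=L^{-n}\delta$ to build an efficient cover of $K_{\phi,\alpha}$ out of balls, exploiting that the large deviations bound controls $\Leb(K_{\phi,\alpha/2,n})$ and hence the number of such balls needed — but that covering/counting step is the content of the subsequent argument, not of this lemma.
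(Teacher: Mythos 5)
Your proof is correct and follows essentially the same route as the paper's: iterate the Lipschitz bound to keep the orbits $\delta$-close for $k=0,\dots,n-1$, invoke the uniform continuity choice of $\delta$, average, and finish with the triangle inequality. The only (harmless) difference is that you track the exact range of $k$ needed for the Birkhoff sum, whereas the paper states the closeness for $k=0,\dots,n$.
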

\begin{proof}
Let $y\in B(x,r)$. Then $\dist(x,y)< L^{-n}\delta$ and consequently
\begin{equation*}
	\dist(f^k(x),f^k(y))<\delta\text{ for } k=0,1,\dots n.
\end{equation*}
Therefore, by the choice of $\delta$, we have
\begin{equation*}
	|\phi(f^k(x))-\phi(f^k(y))|<\a/2\text{ for } k=0,1,\dots n,
\end{equation*}
which implies (by averaging over~$k$) that $|\phi_n(x)-\phi_n(y)|<\a/2$. Hence
\begin{equation*}
	|\phi_n(y)-\bar\phi| \ge |\bar\phi-\phi_n(x)|-|\phi_n(x)-\phi_n(y)| \ge \a-\a/2 = \a/2.
\end{equation*}
\end{proof}

Now, for any $n$, let $r(n)$ be the same as in~\ref{eq:r(n)}. Choose a maximal $\frac{r(n)}{2}$-separated set $S_n$ in $M$. The union of $\frac{r(n)}{2}$-neighborhoods of elements of $S_n$ is a cover of $M$, and in particular, of~$K_{\phi,\a,n}$. Remove from this cover the neighborhoods that do not intersect $K_{\varphi,\a,n}$: let 
$$
S_n':=\left\{x\in S_n \colon 
B_{r(n)/2}(x)\cap K_{\phi,\a,n} \neq \emptyset \right\}
$$ and 
$$
\mC_n := \left\{B(x,\frac{r(n)}{2}) \mid x\in S_n' \right\}.
$$

By construction, $\mC_n$ is still a cover of $K_{\phi,\a,n}$. Also, noticing that any point of $K_{\phi,\a}$ belongs to an infinite number of $K_{\phi,\a,n}$, we see that for any $N\in\mathbb N$ the union of $\mC_n$ for $n\ge N$ is a cover of $K_{\phi,\a}$.

Let us estimate the $d'$-dimensional volumes of these covers. First, estimate the cardinality of $\mC_n$: 
note that radius $r(n)/4$ balls centered at the elements of $S_n$ are pairwise disjoint. Hence, the volume of the union
$$
V_n:= \bigcup_{x\in S_n'} B_{r(n)/4}(x)
$$
is at least $card (\mC_n) \cdot c\cdot r(n)^d$, where the constant $c>0$ depends only on the geometry of the manifold $M$. 

On the other hand, by Lemma~\ref{l:ball}, $V_n\subset K_{\phi,\alpha/2,n}$. Hence, due to the assumed large deviations principle, $\Leb(V_n)\le Ce^{-h(\alpha/2,\varphi) n}$. Thus, 
$$
card (\mC_n) \cdot c\cdot r(n)^d \le C e^{-h(\a/2,\phi) n}
$$
and hence
$$
card(\mC_n) \le C' L^{nd} e^{-h(\a/2,\phi) n} = C' L^{n d_0}.
$$

Therefore we see that the $d'$-dimensional volume of the union of covers $V_n$ for $n\ge N$ can be estimated as 
$$
\sum_{n=N}^{\infty} card(\mC_n) \cdot r(n)^{d'} \le \const \cdot \sum_{n=N}^{\infty} L^{-n(d'-d_0)}
$$
This series converges, as $d'>d_0$ by the choice of $d'$. Thus, choosing $N$ sufficiently large, we obtain covers of $K_{\phi,\a}$ with arbitrarily small $d'$-dimensional volumes, and hence $\dim_H K_{\phi,\a} \le d' $ for any $d'>d_0$. This implies that
$$\dim_H K_{\phi,\a} \le d_0,$$
which concludes the proof of Theorem~\ref{t:main}.

\subsection{Proof of Theorem~\ref{t:flow}}
Suppose $x\in K_{\phi,\a}$ for the flow $F$. It means that there exists a sequence $t_n$ growing to $\infty$ such that
$$
|\phi_{t_n}(x)-\bar\phi|>\a.
$$
Note that
$$
|\phi_{t_n}(x)-\phi_{[t_n]} (x)|<\frac{\max_{y\in M} |\phi(y)|}{[t_n]},
$$
where $[\cdot]$ stands for an integer part of a value.
Then for all sufficiently large $n$
$$
|\phi_{[t_n]}(x)-\bar\phi|>\a/2.
$$
Hence $x\in K_{\phi,\a/2}$ for the time $1$ map $f_1$ of a flow $F$. This set has a Hausdorff dimension less than the dimension of $M$ due to Theorem~\ref{t:main}.

\subsection*{Acknowledgments}
The authors are very grateful for fruitful discussions to Yu.~Ilyashenko, M.~Blank, A.~Bufetov, A.~Gorodetsky, Ya.~Pesin and P.~Saltykov. Also, the authors would like to thank the organizers of the conference ``\emph{Geometric and Algebraic Structures in Mathematics}'' for bringing them together. Both authors were partially supported by RFBR grant 10-01-00739-a and joint RFBR/CNRS grant 10-01-93115-CNRS-a.
 Research of the second author was supported by the Chebyshev Laboratory (Department of Mathematics and Mechanics, St.-Petersburg State University) under the Russian Federation government grant 11.G34.31.0026.

\begin{small}

\end{small}

\end{document}